%
%
%

%
\documentclass[12pt]{amsart}
\usepackage{mathrsfs}
\usepackage{}
\usepackage{amssymb, amstext, amscd, amsmath}
\usepackage{amsthm}
\usepackage{graphicx}
\usepackage{txfonts}
\usepackage{enumerate}
\newtheorem{thm}{Theorem}[section]

\newtheorem{prop}[thm]{Proposition}
\newtheorem{lem}[thm]{Lemma}

\theoremstyle{definition}
\newtheorem{rem}[thm]{Remark}

\newtheorem{claim}[thm]{Claim}
\numberwithin{equation}{section}




\begin{document}

\title[on keen weakly reducible Heegaard splittings ]
{on keen weakly reducible Heegaard splittings}

\author{Qiang E}
\address{Department of Mathematics, Dalian Maritime University, Dalian, P.R.China} \email{eqiang@dlmu.edu.cn}

\thanks{This research is supported by  grants
(No.11401069 and No.11671064) of NSFC and Fundamental Research Funds for the Central Universities (No. 3132015231)}

\subjclass[2010]{57N10, 57M27}

\keywords{3-manifolds,  Heegaard surfaces, weakly reducible, topologically
minimal surfaces }

\begin{abstract} A Heegaard splitting which admits a unique pair of disjoint compression disks on distinct sides is said to be keen weakly reducible. This paper provides an construction of keen weakly reducible Heegaard splittings of arbitrary genus except 2. Furthermore, critical Heegaard splittings may yield if we change some conditions.

\end{abstract}
\maketitle

\section{Introduction}
Let $\emph{M}$ be an oriented compact 3-manifold. If there exists a
closed surface $\emph{S}$ which cuts $\emph{M}$ into two
compression bodies $\emph{V}$ and $\emph{W}$, such that
$S=\partial_{+}V=\partial_{+}W$, then $M=V\cup_{S}W$ is called a Heegaard spitting of $M$ and
$\emph{S}$ is called a Heegaard surface of $\emph{M}$. It is well known that every compact connected orientable 3-manifold admits a Heegaard splitting.

A Heegaard splitting $M=V\cup_{S}$W is said to be reducible if there are two essential disks $D\subset V$ and
$E\subset W$ such that $\partial D=\partial E$. In other words, there exists a 2-sphere which intersects $S$ in an essential curve; Otherwise, $M=V\cup_{S}W$ is said to be irreducible.

A Heegaard splitting $M=V\cup_{S}W$ is said to
be weakly reducible\cite{CG} if there are two essential disks
$D\subset V$ and $E\subset W$ such that $\partial
D\cap\partial E=\emptyset$; Otherwise, it is said to be strongly irreducible.

Let $S$ be a closed surface whose genus is at least 2. The distance between two essential simple closed curves
$\alpha$ and $\beta$ in $S$, denoted by $d_S(\alpha, \beta)$, is the
smallest integer $n \geq 0$ such that there is a sequence of
essential simple closed curves $\alpha=\alpha_{0},
\alpha_{1}...,\alpha_{n}=\beta$ in $S$ such that $\alpha_{i-1}$ is
disjoint from $\alpha_{i}$ for $1\leq i\leq n$. Let $A$ and $B$ be two sets of essential simple  closed curves in $S$. The distance between $A$ and $B$, which is denoted by $d_S(A,B)$, is defined to be
$min\{d_S(x,y)|x\in A, y\in B\}$.

If $D$ and $E$ are two compression disks on distinct sides of $S$, sometimes $d_S(\partial D,\partial E)$ is denoted simply by $d_S(D, E)$. For examples, suppose that $g(S)>1$ and $\partial D$ intersects $\partial E$ in one point then $d_S(D, E)=2$, since $\partial[ \overline{N(\partial D\cup\partial E})]\subset S$ is essential  and  disjoint from $\partial D\cup\partial E$.

The distance of the Heegaard splitting $V\cup_{S}W$ is defined to be $d(S)=d_S(\mathscr{D}_V,\mathscr{D}_W)$ where $\mathscr{D}_V$ and $\mathscr{D}_W$ are sets of essential disks in $V$ and $W$, respectively.
$d(S)$ was first defined by Hempel, see \cite{JH}.

$M=V\cup_{S}W$ is said to be a keen Heegaard splitting, if its distance is realized by a unique pair of elements $\mathscr {D}_V$ and $\mathscr {D}_W$.
For any integers $n\geq 2$ and $g\geqslant 3$, there exists a strongly keen Heegaard splitting of genus $g$ whose distance is $n$. For more details, see \cite{AY}.

The distance of $M=V\cup_{S}W$ is equal to 1 if and only if it is irreducible and weakly reducible.  By definition, $M=V\cup_{S}W$ is said to be a keen weakly reducible Heegaard splitting, if there exists only one pair of disjoint compression disks on opposite sides of the Heegaard surface.

The simplest example of keen weakly reducible Heegaard splittings is the genus 1 Heegaard splitting of $S^2\times S^1$ which is  reducible. Moreover, it is well known that there is no keen weakly reducible Heegaard splitting of genus 2. In the following statement, we assume that the the genus of the Heegaard surface is at least 3.

Suppose that $M=V\cup_{S}W$ is a keen weakly reducible Heegaard splitting, $D_0\subset V$ and $E_0\subset W$ are disjoint compression disks. It is easy to observe that $\partial D_0$ and  $\partial E_0$ are not isotopic, moreover,  each is non-separating on $S$, and $\partial D_0\cup\partial E_0$  is separating on $S$. It follows that $V\cup_{S}W$ is irreducible and unstabilized.

Let $F$ be a properly embedded, separating surface with no torus
components in a compact, orientable, irreducible 3-manifold $M$. Then the
disk complex of $F$, denoted by $\mathscr{D}(F)$, is defined as follows: Vertices of $\mathscr{D}(F)$ are isotopy classes of compression
disks for $F$, and a set of $m+1$ vertices forms an $m-$simplex if there are representatives for each that are pairwise disjoint.


David Bachman explored the information which is contained in the
topology of $\mathscr{D}(F)$ by defining the topological index
of $F$\cite{GT}. If $\mathscr{D}(F)$ is non-empty then the topological index of
$F$ is the smallest $n$ such that $\pi_{n-1}(\mathscr{D}(F))$ is
non-trivial. If $\mathscr{D}(F)$ is empty then $F$ will have
topological index $0$. If $F$ has a well-defined topological index
(i.e. $\mathscr{D}(F)=\emptyset$ or non-contractible) then we will say
that $F$ is a topologically minimal surface.

This raises a question that which surfaces are topologically
minimal surfaces, moreover, if a surface is topologically minimal,
we may ask what the topological index is. By definition, $F$ has
topological index 0 if and only if it is incompressible, and has
topological index 1 if and only if it is strongly irreducible. So
we only need to consider weakly reducible surfaces. If a weakly
reducible surface is topologically minimal, the topological index
is at least 2. Index 2 topologically minimal surface are called
critical surfaces which are also defined by David Bachman,
see\cite{DB}and\cite{DC}.

If a weakly reducible Heegaard splitting is keen, the disk complex of the Heegaard surface is quite simple.
By McCullough in\cite{DM}, who showed that the disk
complex of the boundary of a handlebody is contractible, the
 disk complex of the Heegaard surface is obtained by attaching these two
together with a single edge. That will just create a larger
contractible complex. Thus we have the following proposition.

\begin{prop}
 Suppose that $V\cup_{S}W$ is a keen weakly reducible Heegaard splitting, then the disk complex of the Heegaard surface $S$ is contractible, in other words, $S$ is not topologically minimal.
\end{prop}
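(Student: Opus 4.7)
The plan is to present $\mathscr{D}(S)$ as the union of two contractible subcomplexes joined by a single edge, and then contract the result. First I would observe that the full subcomplex $\mathscr{D}_V\subset \mathscr{D}(S)$ spanned by compression disks of $S$ lying in $V$ is contractible by McCullough's theorem, and likewise $\mathscr{D}_W$ is contractible. Since no compression disk of $S$ lies on both sides, these two subcomplexes share no vertices, so $\mathscr{D}(S)$ is obtained from $\mathscr{D}_V\sqcup\mathscr{D}_W$ by attaching the ``crossing'' simplices whose vertex set meets both sides.

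Next I would use the keen weakly reducible hypothesis to enumerate these crossing simplices. A crossing edge is an unordered pair $\{D,E\}$ with $D\subset V$, $E\subset W$ and $D\cap E=\emptyset$; by keenness the only such pair is $(D_0,E_0)$, so there is exactly one crossing edge. To rule out a higher-dimensional crossing simplex, suppose there were a crossing $2$-simplex. It must contain at least two vertices on one side and one on the other, say $D_1,D_2\subset V$ and $E\subset W$ pairwise disjoint; then $\{D_1,E\}$ and $\{D_2,E\}$ are two distinct pairs of disjoint compression disks on opposite sides, contradicting keenness. The same argument rules out any crossing simplex of dimension $\geq 2$.

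Therefore $\mathscr{D}(S)$ equals $\mathscr{D}_V\cup \mathscr{D}_W$ together with a single edge $e$ joining the vertex $D_0\in \mathscr{D}_V$ to the vertex $E_0\in \mathscr{D}_W$. This space is contractible: deformation retract $\mathscr{D}_W$ onto $E_0$, then collapse the edge $e$ onto $D_0$, and finally contract $\mathscr{D}_V$ to a point. Hence $\mathscr{D}(S)$ is contractible and $S$ fails to be topologically minimal.

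The only step requiring real care is the second one, where the keenness hypothesis is used to rule out crossing simplices of dimension larger than one; everything else is a direct application of McCullough's contractibility theorem combined with the standard observation that joining two contractible complexes by a single edge produces a contractible complex.
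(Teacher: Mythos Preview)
Your proof is correct and follows exactly the approach the paper uses: invoke McCullough's theorem to see that $\mathscr{D}_V$ and $\mathscr{D}_W$ are each contractible, then use keenness to conclude that $\mathscr{D}(S)$ is these two complexes joined by the single edge $\{D_0,E_0\}$, hence contractible. In fact you are more careful than the paper, which asserts the ``single edge'' structure without explicitly ruling out higher-dimensional crossing simplices; your argument that any crossing $2$-simplex would produce two distinct disjoint pairs fills that small gap.
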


In \cite{QL}, The author proved that some self-amalgamated Heegaard surfaces are keen. We remark that the genera of all the Heegaard surfaces given in \cite{QL} are odd, and the main theorem contains strict assumptions. In this paper, we will provide an explicit construction of keen weakly reducible Heegaard splittings of arbitrary genus. Such splittings admit a ``symmetrical structure'' and there is a lot of flexibility in doing so. Moreover, we may construct critical Heegaard splittings if we change some conditions.

\section{Preliminaries}

Let $V^*$ be a handlebody. Denote its boundary by $S^*$. Let $V$ be the handlebody obtained by attaching one 1-handle $D_0\times I$ to $V^*$ along a pair of disjoint disks $D_1, D_2\subset S^*$, where $D_0$ is a disk corresponding to the 1-handle. Thus $g(V)=g(V^*)+1$. Recall $\mathscr{D}_V$ is defined to be the set of compression disks of $V$. There is a natural partition of $\mathscr{D}_V$ induced by $D_0$ up to isotopy, as follows: $$\mathscr{D}_V=\mathscr{D}_0\cup\mathscr{D}_1\cup\mathscr{D}_2\cup\mathscr{D}_3$$

$\mathscr{D}_0=\{D_0\}$.

$\mathscr{D}_1=\{D~| D\cap D_0=\emptyset;~ D\neq D_0; D$ is inessential in $V^*\}$.

$\mathscr{D}_2=\{D~| D\cap D_0=\emptyset;~ D$ is essential in $V^*\}$.

$\mathscr{D}_3=\{D~| D\cap D_0\neq\emptyset\}$.

Let $D$ be a compression disk of $V$. Then $D$ belongs to one and only one of the four subsets.  If $D\subset \mathscr{D}_1$, then  $\partial D,~\partial D_1$ and $\partial D_2$ co-bound a pair of pants on $S^*$. It follows that $D$ is isotopic to a band-sum of $D_1, D_2$ along an arc and $D$ cuts $V$ into $V^*$ and a solid torus. $D\subset \mathscr{D}_2$ if and only if $D\subset \mathscr{D}_{V^*}$, obviously. See figure \ref{d3}.

Now consider $D\subset \mathscr{D}_3$. Suppose $D$ is an essential disk in $V$ such that $D\cap D_0\neq\emptyset$.  Furthermore, $D$ is isotoped in $V$ such that $|D\cap D_0|$ is minimal. Let $S_1$ be the surface  $S^*-(int(D_1)\cup int(D_2))$. Then $S_1$ is a sub-surface of $S^*$ with
two boundary components $\partial D_1$ and $\partial D_2$. By standard arguments, we have some observations as follows. See also\cite{ZY}.

\begin{lem}\label{1}
\mbox\par
\begin{enumerate} [(1)]
\item   Each component of $D\cap D_0$ is a properly embedded arc in both $D$ and $D_0$.
\item   Each component of $\partial D\cap S_1$ is essential on $S_1$.
\item   Each component of $D\cap(\partial D_0\times  I)$ is an arc with its two end points lying in distinct boundary components of the annulus $\partial D_0\times  I$.
\end{enumerate}
\end{lem}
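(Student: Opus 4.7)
The plan is three standard innermost/outermost arguments, all driven by the minimality of $|D\cap D_0|$ and the irreducibility of the handlebody $V$. After general position, $D\cap D_0$ is a compact $1$-manifold whose components are either circles or arcs with endpoints in $\partial D\cap\partial D_0$. For (1), suppose some component is a simple closed curve and choose one innermost on $D_0$, call it $c$; it bounds a subdisk $\Delta_0\subset D_0$ whose interior is disjoint from $D$. Since $c$ also bounds a subdisk $\Delta\subset D$, the $2$-sphere $\Delta\cup\Delta_0\subset V$ bounds a $3$-ball by irreducibility of the handlebody. Replacing $\Delta$ by a parallel copy of $\Delta_0$ pushed slightly off $D_0$ produces a disk isotopic to $D$ with strictly fewer intersections with $D_0$, contradicting minimality. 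Thus every component of $D\cap D_0$ is an arc, and because both $D$ and $D_0$ are disks with boundary in $\partial V$, each such arc is automatically properly embedded in both.

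For (3), the key observation is that since $A=\partial D_0\times I\subset\partial V$ and $D$ is properly embedded, $D\cap A=\partial D\cap A$ is a sub-$1$-manifold of the circle $\partial D$; every component is therefore an arc with endpoints in $\partial A=\partial D_1\cup\partial D_2$. (One checks $\partial D\not\subset A$, since otherwise $\partial D$ would be a closed curve on the annulus $A$, forcing $D$ to be inessential or isotopic to $D_0$, contradicting $D\in\mathscr{D}_3$.) Suppose some arc $\alpha\subset D\cap A$ has both endpoints on $\partial D_1$ (the case $\partial D_2$ is symmetric) and is outermost in $A$, so that $\alpha$ together with a subarc $\beta\subset\partial D_1$ co-bounds a disk $\delta\subset A$ whose interior is disjoint from $\partial D$. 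An isotopy of $\partial D$ across $\delta$, extended to $D$ in a small collar, removes the two endpoints of $\alpha$ from $\partial D\cap\partial D_1$. The argument for (2) is parallel: an outermost boundary-parallel component $\alpha'$ of $\partial D\cap S_1$ co-bounds a bigon in $S_1$ with a subarc of $\partial D_1\cup\partial D_2$, and the analogous isotopy removes two endpoints from $\partial D\cap(\partial D_1\cup\partial D_2)$.

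The main obstacle is that these bigon isotopies a priori reduce $|\partial D\cap(\partial D_1\cup\partial D_2)|$ but not $|D\cap D_0|$ itself, so minimality of $|D\cap D_0|$ alone does not directly forbid inessential returns in $A$ or boundary-parallel arcs in $S_1$. The standard remedy is a double minimization: among disks isotopic to the given $D$, first choose one minimizing $|D\cap D_0|$, and then, subject to that, also minimizing $|\partial D\cap(\partial D_1\cup\partial D_2)|$. Because each outermost bigon lies in a thin collar of $\partial D$ on $\partial V$, disjoint from the interior disk $D_0$ (which sits in the middle slice of the $1$-handle), the corresponding isotopies preserve $|D\cap D_0|$ while strictly reducing the secondary count, furnishing the required contradictions for (2) and (3). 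This bookkeeping, rather than the innermost/outermost steps themselves, is where I expect the main technical care.
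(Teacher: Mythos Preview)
The paper does not actually prove this lemma; it states that the three assertions follow ``by standard arguments'' (with a pointer to \cite{ZY}) and moves on. Your write-up supplies exactly those standard innermost/outermost arguments, and they are correct.

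Your observation about the need for a secondary minimization is well taken. The paper's stated hypothesis is only that $|D\cap D_0|$ is minimal, and as you note, a return arc of $\partial D$ in the annulus $A$ that already misses the core $\partial D_0$, or a boundary-parallel arc in $S_1$, cannot be removed by an isotopy that lowers $|D\cap D_0|$. Your two-step minimization (first $|D\cap D_0|$, then $|\partial D\cap(\partial D_1\cup\partial D_2)|$) is the usual remedy and is harmless for all later applications in the paper, since every use of Lemma~\ref{1} and Lemma~\ref{2} only needs \emph{some} representative of the isotopy class of $D$ satisfying (1)--(3). An equivalent and perhaps cleaner normalization is to put $D$ in product position inside the $1$-handle $D_0\times I$ (so that $D\cap(D_0\times I)$ is a union of rectangles $\text{arc}\times I$); this simultaneously realizes the minimum of $|D\cap D_0|$ and forces (2) and (3).
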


 An essential arc $\gamma$ in $S_1$ is called strongly
essential if both boundary points lie in $\partial D_i$ and $\gamma$ is an essential arc on $S_1\cup D_j$ ,
where $\{i, j\} =\{1, 2\}$. Recall that $D\subset \mathscr{D}_3$. Let $\gamma$ be an outermost component of $D\cap(D_1\cup D_2)$ on $D$. This means that $\gamma$, together with an arc $\gamma_1\subset\partial D$, bounds a sub-disk in $D$, say $D_\gamma$, such that $D_\gamma$ intersects $D_\gamma\cap (D_1\cup D_2)=\gamma$. We call $\gamma_1$ an outermost arc related to $\gamma$ and call $D_\gamma$ an outermost disk related to $\gamma$.

\begin{lem}\label{2}\cite{ZY}

\mbox\par

\begin{enumerate} [(1)]
\item $\gamma_1$,  whose end points lie in one of $D_1$ and $D_2$, is strongly essential in $S_1$.
\item $D_\gamma\subset V^*$ and is essential in $V^*$.
\end{enumerate}

\end{lem}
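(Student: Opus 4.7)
The strategy is to extract both conclusions from the outermost position of $\gamma$ on $D$, the minimality of $|D\cap D_0|$, and Lemma~\ref{1}. The arcs of $D\cap(D_1\cup D_2)$ partition $D$ into sub-disks, each of which lies entirely in $V^*$ or entirely in the 1-handle $D_0\times I$, so $D_\gamma$ is forced onto one side of $D_i$.

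For $D_\gamma\subset V^*$: suppose instead $D_\gamma\subset D_0\times I$. Then $\gamma_1$ lies in $\partial(D_0\times I)\cap\partial V=\partial D_0\times I$. Since $\partial D\cap(\partial D_1\cup\partial D_2)$ separates $\partial D$ into arcs alternating between $S_1$ and the annulus $\partial D_0\times I$, and $\gamma_1$ is connected and contained in $\partial D_0\times I$, $\gamma_1$ must be a single such arc. By Lemma~\ref{1}(3) its endpoints then lie on the two distinct boundary circles of $\partial D_0\times I$, contradicting the fact that $\gamma\subset D_i$ forces both endpoints of $\gamma_1$ to lie on the single circle $\partial D_i$. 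Hence $D_\gamma\subset V^*$; this also yields the first clause of (1), that the endpoints of $\gamma_1$ lie in $\partial D_i$.

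For the essentiality statements, observe first that $D_\gamma$ is essential in the handlebody $V^*$ iff $\partial D_\gamma=\gamma\cup\gamma_1$ is essential on $S^*=\partial V^*$. Moreover, if $\gamma_1$ were inessential in $S_1$ or in $S_1\cup D_j$, then combining the bounding sub-disk with the sub-disk of $D_i$ cobounded by $\gamma$ and the appropriate arc on $\partial D_i$ would exhibit $\partial D_\gamma$ as bounding a disk on $S^*$; so the essentiality of $D_\gamma$ in $V^*$ automatically entails the strong essentiality of $\gamma_1$ in $S_1$. It therefore suffices to prove $D_\gamma$ is essential in $V^*$. If not, $\partial D_\gamma$ bounds a disk $\Delta\subset S^*$, and since $V^*$ is an irreducible handlebody, $D_\gamma\cup\Delta$ bounds a ball $B\subset V^*$. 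An isotopy of $D$ in $V$ across $B$ replaces $D_\gamma$ by a disk disjoint from $D_0$, removing the strand of $D$ through the 1-handle incident to $\gamma$ and strictly reducing $|D\cap D_0|$, a contradiction.

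The main technical obstacle is carrying out this final isotopy without introducing new intersections with $D_0$, since $\Delta$ may meet $\partial D_1\cup\partial D_2$ and may contain other arcs of $D\cap S^*$. This is handled by first minimizing $|\Delta\cap(\partial D_1\cup\partial D_2)|$ via standard innermost-disk and outermost-arc reductions on $\Delta$; after this reduction, $\Delta\cap\partial D_i$ is a single arc joining the two endpoints of $\gamma$, so that $\Delta$ decomposes as a sub-disk of $D_i$ together with a disk in $S_1\cup D_j$ disjoint from the interior of $D_i$, which is exactly what is needed for the push-across to decrease $|D\cap D_0|$.
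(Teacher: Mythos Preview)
The paper does not prove this lemma; it simply quotes it from \cite{ZY}. There is therefore no in-paper argument to compare against, and I will assess your proposal on its own merits.

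Your proof that $D_\gamma\subset V^*$ via Lemma~\ref{1}(3), and with it the fact that $\partial\gamma_1$ lies on a single $\partial D_i$, is correct. So is your observation that strong essentiality of $\gamma_1$ follows once $D_\gamma$ is known to be essential in $V^*$. The problem is your last paragraph. The ``innermost-disk and outermost-arc reductions on $\Delta$'' you propose are vacuous: $\Delta$ is a sub-disk of the \emph{fixed} surface $S^*$ with prescribed boundary $\partial D_\gamma=\gamma\cup\gamma_1$, so there is nothing to isotope. In fact one sees directly, with no reduction, that $\Delta\cap\partial D_i$ is a single arc and that $\Delta\cap\partial D_j$ is either empty or all of $\partial D_j$. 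The first alternative is immediately excluded by Lemma~\ref{1}(2), since then $\gamma_1$ would be inessential in $S_1$; you never invoke Lemma~\ref{1}(2) here, which is the quickest way through.

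The genuine gap is the remaining alternative $D_j\subset\Delta$. In that case your ``push $D_\gamma$ across the ball $B$'' does eliminate the crossing $\gamma$ with $D_i$, but since $\Delta$ covers all of $D_j$ the push forces the disk through $D_j$ and hence through the $1$-handle from the other end, creating a new component of intersection with $D_j$ (and, after straightening, with $D_0$). It is not at all clear that the net effect lowers $|D\cap D_0|$, and your decomposition of $\Delta$ into ``a sub-disk of $D_i$ together with a disk in $S_1\cup D_j$'' does nothing to prevent this --- that decomposition holds automatically and is exactly the bad case. Handling $D_j\subset\Delta$ requires a separate argument (for example, using that in product position the rectangle of $D$ in the $1$-handle adjacent to $D_\gamma$ has its far edge on $D_j$, and analyzing $D_\gamma$ together with that rectangle), which your outline does not supply.
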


\begin{figure}
\centering
    \includegraphics[width=7cm]{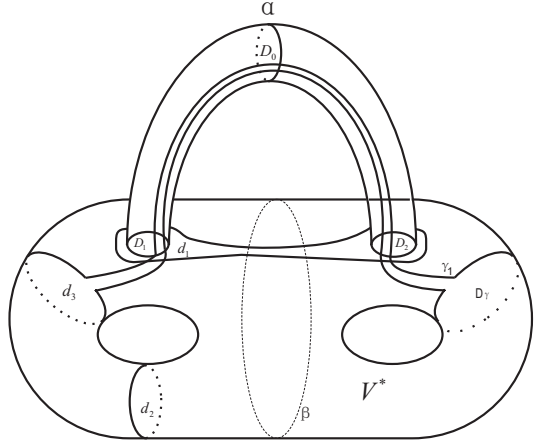}\\
  \caption{disks in the handlebody such that $d_i\subset \mathscr{D}_i$}\label{d3}
\end{figure}

\section{Construction of keen weakly reducible Heegaard splittings}

In this section, we will provide an explicit construction of keen weakly reducible Heegaard splittings of genus g.

Let $V^*$ be a genus $(g-1)$ handlebody. We choose an essential curve  $\beta\subset \partial V^*$, such that $\beta$ is separating on $\partial V^*$ and $d_{\partial V^*}(\beta, \mathscr{D}_{V^*})\geq 4$ .
Then we attach a 2-handle $E_0\times I$ to $V^*$ along $\beta$ so that $\beta$ bounds a disk $E_0$. This yields a 3-manifold denoted by $M_1$.

Next, we attach a 1-handle $D_0\times I$ to $M_1$ such that the gluing disks denoted by $D_1, D_2$ are on distinct components of $\partial V^*-\beta$, where $D_0$ is an disk corresponding to the 1-handle and we denote $\partial D_0$ by $\alpha$. The resulted 3-manifold denoted by $M_2$.  $\partial M_2$ is connected and $g(\partial M_2)=g-1$.

Next, let $W^*$ be another genus $(g-1)$ handlebody.  We glue $W^*$ to $M_2$ via an orientation preserving homeomorphism $f:\partial W^*\rightarrow \partial M_2$ such that
$d_{\partial W^*}(f(\alpha), \mathscr{D}_{W^*})\geq 4$. Since $\alpha$ is separating on $\partial M_2$, $f(\alpha)$ is separating on $\partial W^*$. Recall $D_1\cup D_2=V^*\cap (D_0\times I)$. Similarly,  $W^*\cap (E_0\times I)$ are two disks, denoted by $E_1$,$E_2$.

The result is a closed 3-manifold denoted by $M$. Denote $V=V^*\cup (D_0\times I)$, $W=W^*\cup (E_0\times I)$ and $S=V\cap W$. Then both $V$ and $W$ are genus $g$ handlebodies and $M=V\cup_SW$ is a Heegaard splitting.  $\alpha$ bounds the disk $D_0$ in $V$ and $\beta$ bounds the disk $E_0$ in $W$. Since the 2-handle $E_0\times I$ is attached before the 1-handle $D_0\times I$, $D_0\cap E_0=\alpha\cap\beta=\emptyset$. It follows that $M=V\cup_SW$ is weakly reducible.

\begin{thm}
 $M=V\cup_SW$ is a keen weakly reducible Heegaard splitting.
\end{thm}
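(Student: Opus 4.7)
Suppose $(D', E')$ is a disjoint pair of compression disks with $D' \subset V$ and $E' \subset W$; the plan is to show that $(D', E')$ must be isotopic to $(D_0, E_0)$. I would first apply the partition from Section~2 (Lemmas~1 and~2) to classify $D'$ with respect to $D_0$, placing it in one of $\mathscr{D}_0, \mathscr{D}_1, \mathscr{D}_2, \mathscr{D}_3$, and symmetrically $E'$ relative to $E_0$ in one of $\mathscr{D}_0^W, \mathscr{D}_1^W, \mathscr{D}_2^W, \mathscr{D}_3^W$.

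The central technical step is the following key lemma: \emph{if $D' \not\simeq D_0$, then $\partial D' \cap \beta \neq \emptyset$ on $S$.} The proof splits by the class of $D'$. For $D' \in \mathscr{D}_1$, $D'$ is a band-sum of $D_1$ and $D_2$ along an arc in $\partial V^* \setminus (D_1 \cup D_2)$, which must cross the separating curve $\beta$ because $D_1$ and $D_2$ lie on opposite components of $\partial V^* \setminus \beta$; hence $\partial D'$ crosses $\beta$ at least twice. For $D' \in \mathscr{D}_2$, $\partial D' \in \mathscr{D}_{V^*}$, so the hypothesis $d_{\partial V^*}(\beta, \mathscr{D}_{V^*}) \geq 4$ rules out disjointness from $\beta$ on $\partial V^*$, hence on $S$. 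For $D' \in \mathscr{D}_3$, Lemma~2 supplies an outermost essential sub-disk $D'_\gamma \subset V^*$ with $\partial D'_\gamma = \gamma \cup \gamma_1$ where $\gamma \subset D_i$; the distance condition forces $\partial D'_\gamma \cap \beta \neq \emptyset$ on $\partial V^*$, and since $\gamma \cap \beta = \emptyset$, we must have $\gamma_1 \cap \beta \neq \emptyset$; as $\gamma_1 \subset \partial D'$, the claim follows. The symmetric statement, $E' \not\simeq E_0 \Rightarrow \partial E' \cap \alpha \neq \emptyset$ on $S$, follows the same outline using $d_{\partial W^*}(\alpha, \mathscr{D}_{W^*}) \geq 4$.

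From the pair of key lemmas, the disjointness of $(D', E')$ immediately yields $D' \simeq D_0 \Leftrightarrow E' \simeq E_0$: if $D' = D_0$ then $\partial D' = \alpha$, so $\partial E' \cap \alpha = \emptyset$, forcing $E' \simeq E_0$ by the symmetric lemma; and vice versa. The remaining task is to exclude the possibility that $D' \not\simeq D_0$ and $E' \not\simeq E_0$ simultaneously.

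The main obstacle will be this last case. The cleanest sub-case is $D' \in \mathscr{D}_1$ and $E' \in \mathscr{D}_1^W$: here $\partial D'$ and $\partial E'$ are both separating on $S$, bounding $1$-holed tori $T_V, T_W$ containing respectively the annuli $A_V = \partial D_0 \times I$ and $A_W = \partial E_0 \times I$. Disjointness of $\partial T_V$ and $\partial T_W$ forces $T_V$ and $T_W$ to be either nested or disjoint as subsurfaces of $S$. In the nested case, the region between them is an annulus (by an Euler-characteristic count), so $\partial D' \simeq \partial E'$ on $S$, producing a reducing sphere that contradicts the irreducibility of $V \cup_S W$ noted in the introduction. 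In the disjoint case, the pair-of-pants sub-surface of $T_V$, which is a regular neighborhood on $\partial V^*$ of $D_1 \cup D_2$ together with the band arc defining $D'$, necessarily meets $A_W$ because the band arc crosses $\beta$; thus $T_V \cap A_W \neq \emptyset$, whence $T_V \cap T_W \supseteq T_V \cap A_W \neq \emptyset$, contradicting disjointness. The remaining mixed sub-cases, involving $\mathscr{D}_2$ or $\mathscr{D}_3$ on either side, reduce to analogous contradictions by replacing $D'$ or $E'$ with the essential outermost sub-disks from Lemma~2 and invoking the symmetric distance condition.
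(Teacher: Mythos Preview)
Your key lemma and its symmetric counterpart are correct and do exactly what the paper's Claims~\ref{01}--\ref{03} (and their mirror images) accomplish: they force $D'\simeq D_0 \Leftrightarrow E'\simeq E_0$. However, the proposal has two genuine gaps once you reach the situation $D'\not\simeq D_0$ and $E'\not\simeq E_0$.

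First, in your $(\mathscr{D}_1,\mathscr{D}_1^W)$ sub-case you dispose of the nested configuration by saying $\partial D'\simeq\partial E'$ produces a reducing sphere, ``contradicting the irreducibility noted in the introduction.'' That passage of the introduction deduces irreducibility \emph{from} keenness, so invoking it here is circular. The paper instead argues directly: since $g(S)\geq 3$, the two once-punctured tori coincide, so $\alpha$ and $\beta$ are disjoint essential curves in a single once-punctured torus and hence isotopic on $S$; this is impossible because $\alpha\cup\beta$ separates $S$ into two pieces each of positive genus (coming from the two components of $\partial V^*\setminus\beta$), whereas isotopic disjoint curves would cobound an annulus.

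Second, and more seriously, your final sentence asserting that the mixed cases ``reduce to analogous contradictions by replacing $D'$ or $E'$ with the essential outermost sub-disks'' does not go through as stated. Passing to an outermost sub-disk $D_\gamma\subset V^*$ (or $E_\gamma\subset W^*$) only tells you that $\partial D_\gamma$ meets $\beta$; it says nothing about $\partial D_\gamma\cap\partial E'$, which is what you need. The paper's mechanism here is different: from an outermost arc of $\partial D'$ (or of $\partial D_\gamma$) with endpoints on one of $\partial E_1,\partial E_2$, it builds a closed curve $\gamma$ on $\partial W^*$ disjoint from $\alpha$, and then shows $\gamma$ is disjoint from (or meets in at most one point) the outermost disk $E_{\gamma_2}\in\mathscr{D}_{W^*}$ coming from $E'$. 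This yields a chain $f(\alpha),\gamma,\partial E_{\gamma_2}$ in $\partial W^*$ of length at most~$3$, contradicting $d_{\partial W^*}(f(\alpha),\mathscr{D}_{W^*})\geq 4$. The $(\mathscr{D}_3,\mathscr{D}^3)$ case in particular splits further according to whether the two outermost arcs have endpoints on the same or different $\partial E_i$, and it is exactly here that the full strength of the bound $\geq 4$ (rather than $\geq 2$ or $\geq 3$) is used. None of this intermediate-curve construction is present in your outline, and without it the remaining cases are not settled.
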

\begin{proof}

In order to show $V\cup_SW$ is  keen, we firstly divide $\mathscr{D}_V$ and $\mathscr{D}_W$ as mentioned in Section 2:

$$\mathscr{D}_V=\mathscr{D}_0\cup\mathscr{D}_1\cup\mathscr{D}_2\cup\mathscr{D}_3$$ and
$$\mathscr{D}_W=\mathscr{D}^0\cup\mathscr{D}^1\cup\mathscr{D}^2\cup\mathscr{D}^3$$
such that:

$\mathscr{D}_0=\{D_0\}$.

$\mathscr{D}_1=\{D~| D\cap D_0=\emptyset;~ D\neq D_0; D$ is inessential in $V^*\}$.

$\mathscr{D}_2=\{D~| D\cap D_0=\emptyset;~ D$ is essential in $V^*\}= \mathscr{D}_{V^*}$.

$\mathscr{D}_3=\{D~| D\cap D_0\neq\emptyset\}$.

$\mathscr{D}^0=\{E_0\}$.

$\mathscr{D}^1=\{E~| E\cap E_0=\emptyset;~ E\neq E_0; D$ is inessential in $W^*\}$.

$\mathscr{D}^2=\{E~| E\cap E_0=\emptyset;~ E$ is essential in $W^*\}= \mathscr{D}_{W^*}$.

$\mathscr{D}^3=\{E~| E\cap E_0\neq\emptyset\}$.

In order to show that ($D_0$, $E_0$) is the unique disjoint pair of compression disks on distinct sides of $S$, we will show that for each $D\subset\mathscr{D}_i$, $E\subset\mathscr{D}^j$, where $i,j=0,1,2,3$ and $(i,j)\neq(0,0)$,  $D\cap E=\partial D\cap \partial E\neq\emptyset$ holds.

\begin{claim}\label{01}
For each $D\subset\mathscr{D}_0$, $E\subset\mathscr{D}^1$, $D\cap E\neq\emptyset$.
\end{claim}

In this case, $D=D_0$ and $E$ is a band-sum of two copies of $E_0$ along an arc. Recall that $\alpha=\partial D_0$ and $\beta=\partial E_0$ and the union separates $S$. $\alpha$ separates the two copies of $E_0$. Any arc connected to  the two copies intersects $\alpha$. It follows that $D\cap E\neq\emptyset$.

\begin{claim}\label{02}
For each $D\subset\mathscr{D}_0$, $E\subset\mathscr{D}^2$, $D\cap E\neq\emptyset$.
\end{claim}

In this case, $D=D_0$ and $E$ is essential both in handlebodies $W$ and $W^*$. Notice that $W^*=\overline{W\backslash (E_0\times I)}$. Recall that
$d(f(\alpha),\mathscr{D}_{W^*} )\geq 3$, thus $D_0$ intersects each compression disk of $W^*$.

\begin{rem}\label{r}
By the above two claims, if $E$ is an essential disk in $W$ which is not isotopic to $E_0$ and disjoint from $E_0$, then $D_0\cap E\neq\emptyset$.
Similarly, if $D$ is an essential disk in $V$ which is not isotopic to $D_0$ and disjoint from $D_0$, then $D\cap E_0\neq\emptyset$.

In the following argument, we assume that $|D\cap D_0|+|E\cap E_0|$ is minimal in the isotopy classes of $D$ and $E$.
\end{rem}

\begin{claim}\label{03}
For each $D\subset\mathscr{D}_0$, $E\subset\mathscr{D}^3$, $D\cap E\neq\emptyset$.
\end{claim}

In this case, $D=D_0$ and $E\cap E_0\neq\emptyset$. Assume that $D_0\cap E=\emptyset$. By applying Lemma \ref{2} to $W$ and $W^*$, there is an outermost disk of $E$, say $E_\gamma$, which is essential in $W^*$.  By Claim \ref{02}, $D_0\cap E_\gamma\neq\emptyset$. Notice that $D_0\cap E_\gamma\subset D_0\cap E$.  This contradicts the assumption $D_0\cap E=\emptyset$.

\begin{claim}\label{11}
For each $D\subset\mathscr{D}_1$, $E\subset\mathscr{D}^1$, $D\cap E\neq\emptyset$.
\end{claim}

In this case, $D$ is  a band-sum of two copies of $D_0$ and $E$ is a band-sum of two copies of $E_0$. $\partial D$ bounds a once-punctured torus $T_D$ on $S$ and $\partial E$ bounds a once punctured torus $T_E$. Suppose to the contrary that $D\cap E=\emptyset$. First we assume that $\partial E$ is isotopic to $\partial D$. In this case, since $g(S)>2$, we have $T_D=T_E$.  It follows that $\alpha$ and $\beta$ are isotopic because $\alpha\cap\beta=\emptyset$, a contradiction. Hence $\partial E\neq\partial D$. In this case $T_D\cap T_E=\emptyset$ and $\alpha\cup \beta$ does not separate the Heegaard surface, a contradiction.

\begin{claim}\label{12}
For each $D\subset\mathscr{D}_1$, $E\subset\mathscr{D}^2$, $D\cap E\neq\emptyset$.
\end{claim}

In this case, $D$ is  a band-sum of two copies of $D_0$ and $E$ is an essential disk in $W^*$. $\partial D$ bounds a once-punctured torus $T_D$ on $S$ and $\alpha\subset T_D$. Suppose to the contrary that $D\cap E=\emptyset$. If $\partial E\nsubseteq T_D$ then $E\cap D_0=\emptyset$ which contradicts Claim \ref{02}. Thus $\partial E\subset T_D\subset S$. Moreover, $\partial E\subset N(\alpha)\subset S$ since $E\cap E_0=\emptyset$. In this case $\partial E$ is isotopic to $\alpha$ which implies that $d_{\partial W^*}(f(\alpha),~\mathscr{D}_{W^*} )=0$, a contradiction.

\begin{claim}\label{13}
For each $D\subset\mathscr{D}_1$, $E\subset\mathscr{D}^3$, $D\cap E\neq\emptyset$.
\end{claim}

In this case $E\cap E_0\neq\emptyset$. By applying Lemma \ref{2} to $W$ and $W^*$, there is an outermost arc, say $\gamma$, whose two endpoints lie in one of $\partial E_1$ and $\partial E_2$. Furthermore, there is an outermost disk of $E$, say $E_{\gamma}$, such that $E_{\gamma}$ is essential in $W^*$. Without loss of generality, we assume that  $E_{\gamma}\cap E_1$ is an arc denoted by $e_1$, that is to say, $\partial\gamma\subset\partial E_1$.

$D$ is a band-sum of two copies of $D_0$ and the two copies lie distinct sides of $\beta$. Hence there is a sub-arc $\gamma_1$ of $\partial D$ such that $\partial\gamma_1\subset\partial E_2$.  $\gamma_1$, together with a sub-arc of $\partial E_2$, say $e_2$, bounds a disk isotopic to $D_0$.

$D\cap E=\emptyset$ implies that $\gamma\cap\gamma_1=\emptyset$,  thus $(\gamma\cup e_1)\cap(\gamma_1\cup e_2)=\emptyset$ which means that $E_{\gamma}\cap D_0=\emptyset$. This contradicts Claim \ref{02}.

\begin{claim}\label{22}
For each $D\subset\mathscr{D}_2$, $E\subset\mathscr{D}^2$, $D\cap E\neq\emptyset$.
\end{claim}

In this case, $D$ is essential in $V^*$ and $E$ is essential in $W^*$. Suppose to the contrary that $D\cap E=\emptyset$.
We remark that $\partial D\cap\beta\neq\emptyset$ and $\beta$ separates $\partial V^*$. Hence there is a sub-arc $\gamma_1$ of $\partial D$ such that $\partial\gamma_1\subset\partial E_1$ and $\gamma_1$ together with a arc of $E_1$ forms an essential closed curve $\gamma$ on $\partial W^*$.  Moreover, $D_0\cap D=\emptyset$ and $D_0\cap\beta=\emptyset$ mean that $D_0\cap\gamma=\emptyset$. $E\cap D=\emptyset$ and $E\cap\beta=\emptyset$ mean that $E\cap\gamma=\emptyset$.
Hence
 $d_{\partial W^*}(f(\alpha),\mathscr{D}_W^*)\leq d_{\partial W^*}(D_0,E)\leq d_{\partial W^*}(D_0,\gamma)+d_{\partial W^*}(\gamma, E)=1+1=2$, a contradiction.

\begin{claim}\label{23}
For each $D\subset\mathscr{D}_2$, $E\subset\mathscr{D}^3$, $D\cap E\neq\emptyset$.
\end{claim}

In this case, $D$ is essential in $V^*$ and $E\cap E_0\neq\emptyset$.  Suppose to the contrary that $D\cap E=\emptyset$.

By Lemma \ref{2}, there is an outermost arc $\gamma_2$ of $\partial E$ and an outermost disk of $E$ , say $E_{\gamma_2}$ which is essential in $W^*$.
Without loss of generality, we may assume that $\partial\gamma_2\subset\partial E_2$.

Since $D\subset\mathscr{D}_2$, by the argument mentioned in Claim\ref{22}, there is a sub-arc $\gamma_1$ of $\partial D$ such that $\partial\gamma_1\subset\partial E_1$ and $\gamma_1$ together with an arc of $E_1$ forms an essential closed curve $\gamma$ on $\partial W^*$ such that $D_0\cap\gamma=\emptyset$.

$D\cap E=\emptyset$ implies that $\gamma_1\cap\gamma_2=\emptyset$. According to our choice, $\partial\gamma_1$ and $\partial\gamma_2$ lie on distinct disks.
Hence $E_{\gamma_2}\cap\gamma=\emptyset$. Thus

$$d_{\partial W^*}(f(\alpha),\mathscr{D}_W^*)\leq d_{\partial W^*}(D_0,E_{\gamma_2}) \leq d_{\partial W^*}(D_0,\gamma)+d_{\partial W^*}(\gamma, E_{\gamma_2})=2$$

a contradiction.

\begin{claim}
For each $D\subset\mathscr{D}_3$, $E\subset\mathscr{D}^3$, $D\cap E\neq\emptyset$.
\end{claim}

In this case, $D\cap D_0\neq\emptyset$ and $E\cap E_0\neq\emptyset$.  Suppose to the contrary that $D\cap E=\emptyset$.

By Lemma \ref{2}, there exists an outermost arc $\gamma_1$ of $\partial D$ and an outermost disk of $D$ , say $D_{\gamma_1}$ which is essential in $V^*$. Since $D_{\gamma_1}\cap\beta\neq\emptyset$ and $\beta$ separates $\partial V^*$, there is an outermost arc say of $\gamma_{11}$ such that
$\gamma_{11}\subset\gamma_1$, moreover, $\partial\gamma_{11}\subset\partial E_1$ or $\partial\gamma_{11}\subset\partial E_2$. See Figure \ref{ot}.

Without loss of generality, we may assume that $\partial\gamma_{11}\subset\partial E_1$. Thus $\gamma_{11}$ together with an arc of $E_1$ forms an closed curve, say $\gamma$, which is essential in $\partial W^*$. We remark that $\gamma=\gamma_{11}\cup e_1$, where $e_1$ is an arc in $E_1$.
Since the outermost arc of $D$ and $E_2$ are disjoint from $\alpha$, $D_0\cap\gamma=\emptyset$.

By applying Lemma \ref{2} to $W$ and $W^*$, there is an outermost arc of $\partial E$, say $\gamma_2$, where $\partial \gamma_2\subset \partial E_1$ or $\partial \gamma_2\subset \partial E_2$.

$D\cap E=\emptyset$ means that $\gamma_{11}$ and $\gamma_2$ are disjoint.

If $\partial \gamma_2\subset \partial E_2$, $\gamma_2$, together with an arc in $E_2$, bounds an outermost disk, say $E_{\gamma_2}$, which is essential in $\partial W^*$.  In this case $\gamma$ and $E_{\gamma_2}$ are disjoint. It follows that:

$$d_{\partial W^*}(f(\alpha),\mathscr{D}_W^*)\leq d_{\partial W^*}(D_0,E_{\gamma_2}) \leq d_{\partial W^*}(D_0,\gamma)+d_{\partial W^*}(\gamma, E_{\gamma_2}) =2$$ which is a contradiction.

If $\partial \gamma_2\subset \partial E_1$,  $\gamma_2$, together with an arc $e_2$ in $E_1$, bounds an outermost disk, say $E_{\gamma_2}$, which is essential in $\partial W^*$.  In this case $\gamma$ intersects $E_{\gamma_2}$ in at most one point, since $e_1$  intersects  $e_2$ at most one point.
\begin{figure}
\centering
    \includegraphics[width=10cm]{ot.eps}\\
  \caption{outermost arcs in $S\backslash N(\beta)$}\label{ot}
\end{figure}
It follows that:

$$d_{\partial W^*}(f(\alpha),\mathscr{D}_W^*)\leq d_{\partial W^*}(D_0,E_{\gamma_2}) \leq d_{\partial W^*}(D_0,\gamma)+d_{\partial W^*}(\gamma, E_{\gamma_2}) \leq 1+2=3$$

which is also a contradiction.

If for some $(i,j)$, each $D\subset\mathscr{D}_i$, $E\subset\mathscr{D}^j$, $D\cap E=\partial D\cap \partial E\neq\emptyset$ holds, then by the symmetric construction of the Heegaard splitting and the same partitions of the $\mathscr{D}_V$ and $\mathscr{D}_W$, it also holds for $(j,i)$.
This completes the proof.
\end{proof}

\section{on topologically minimal amalgamated Heegaard surfaces}

Let $N$ be a 3-manifold. Suppose $F$ is an incompressible separating surface which cuts $N$ into $N_1$ and $N_2$. Then $N$ is obtained by gluing two 3-manifolds, say $N_1$ and $N_2$, along a homeomorphism $f$. If $N_i$ admits a Heegaard splitting $V_i\cup_{S_i}W_i$ for $i=1,~2$, $N$ has a natural Heegaard splitting called the amalgamation of $V_i\cup_{S_i}W_i$ for $i=1,~2$, as:

$$N=C_1\cup_S C_2=(V_1\cup_{S_1}W_1)\cup_F(W_2\cup_{S_2}V_2)$$

 The amalgamation of two unstabilized Heegaard splittings may be stabilized, see\cite{TR}. However, if the gluing map $f$ is complicated enough, then the amalgamation of two minimal Heegaard splittings is unstabilized\cite{ML}. On the other hand, the amalgamation of two high distance Heegaard splittings is unstabilized\cite{TQ}.

David Bachman showed that if the gluing map $f$ is complicated enough, then the amalgamated Heegaard surface is not topologically minimal\cite{Db}.  We apply the idea of \cite{TQ} to conjecture that the amalgamated Heegaard surface of two high distance Heegaard splittings is not topologically minimal. Our construction of keen weakly reducible Heegaard splittings prsents positive examples.

\begin{thm}
Let $M$ be a 3-manifold. Suppose that $M$ admits an amalgamated Heegaard splitting:
$$M=V\cup_S W=(V_1\cup_{S_1}W_1)\cup_F(W_2\cup_{S_2}V_2).$$  such that $g(F)=g(S_1)=g(S_2)$  and $d(S_i)\geq 4$, for $i=1,~2$, then $M=V\cup_S W$ is keen weakly reducible hence $S$ is not topologically minimal.
\end{thm}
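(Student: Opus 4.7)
The strategy is to recognize the amalgamated splitting $V\cup_S W$ as a special case of the construction in Section~3, after which Theorem~3.1 and Proposition~1.1 immediately deliver the conclusion.

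First I would use the genus equality $g(F)=g(S_1)=g(S_2)$ to pin down the structure of the two compression bodies abutting $F$. The point is that this equality forces each of $W_1,W_2$ to contribute a single essential disk in the amalgamation. Dualizing across $F$, the splitting $V\cup_S W$ is built from two handlebodies $V^*:=V_1$ and $W^*:=V_2$ by attaching (a) a 2-handle $E_0\times I$ along a separating curve $\beta\subset\partial V^*$ --- the image of the essential compression disk of $W_1$ --- and (b) a 1-handle $D_0\times I$, whose co-core $D_0$ comes (via the gluing $f$) from the essential compression disk of $W_2$, with feet on opposite components of $\partial V^*\setminus\beta$. This puts $V\cup_S W$ in the exact format of the construction in Section~3.

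Second, I would transfer the Hempel distance hypotheses $d(S_i)\geq 4$ into the two distance bounds required there. Since $\beta=\partial E_0$ bounds an essential disk in $W_1$ and $\mathscr{D}_{V^*}=\mathscr{D}_{V_1}$,
\[
d_{\partial V^*}(\beta,\mathscr{D}_{V^*})
\;\geq\;d_{S_1}(\mathscr{D}_{W_1},\mathscr{D}_{V_1})
\;=\;d(S_1)\;\geq\;4,
\]
and a symmetric computation applied to $\alpha=\partial D_0$ transported through $f$ yields $d_{\partial W^*}(f(\alpha),\mathscr{D}_{W^*})\geq d(S_2)\geq 4$. With both bounds in place, Theorem~3.1 applies and gives that $V\cup_S W$ is keen weakly reducible, and Proposition~1.1 then tells us that the disk complex $\mathscr{D}(S)$ is contractible, so $S$ is not topologically minimal.

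The main obstacle is the first step: honestly identifying the amalgamation with the Section~3 construction, and in particular verifying that the 1-handle feet land on opposite components of $\partial V^*\setminus\beta$ and that the curves $\alpha$ and $\beta$ are non-isotopic on $S$. These facts rely on the separating nature of $\beta$ together with the genus hypothesis $g(F)=g(S_i)$, which is what ultimately guarantees the minimality of the handle data produced by the amalgamation. The distance transfer in the second step is then essentially a diagram chase, and the final appeal to Theorem~3.1 and Proposition~1.1 is immediate.
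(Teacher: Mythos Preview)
Your proposal is correct and follows essentially the same route as the paper: identify the amalgamated splitting with the Section~3 construction and then invoke Theorem~3.1 together with Proposition~1.1. The paper's own argument is terser---it notes that $d(S_i)\geq 4$ forces each $V_i\cup_{S_i}W_i$ to be nontrivial, that $g(\partial_+W_i)=g(\partial_-W_i)$ then forces $F$ to be disconnected with a unique separating compression disk in each $W_i$, and declares that the resulting data ``coincides with our construction''---whereas you spell out the distance transfer $d_{\partial V^*}(\beta,\mathscr D_{V^*})\geq d(S_1)$ and its mirror explicitly, which is a welcome addition.
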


\begin{proof}
For each $i$, $d(S_i)\geq 4$ implies that $V_i\cup_{S_i}W_i$ is not an trivial Heegaard splitting. Since $g(\partial_+W_i)=g(S_i)=g(F)=g(\partial_-W_i)$, $F$ is disconnected and there exists only on separating compression disk embedded in $W_i$.(See figure \ref{gnn}) If $d(S_i)\geq 4$, it is easy to check that $V\cup_S W$ coincides with our construction.
\end{proof}

We remark that if $d(S_i)\geq 2$ then $S$ may not be keen and be topologically minimal. One example is that the standard Heegaard surface of $F^*\times S^1$, where $F^*\times S^1$ is a $S^1$ -bundle of a connected closed surface. In this case $V_i\cup_{S_i}W_i$ is the standard type 2 Heegaard splitting of $F^*\times I$ and $d(S_i)=2$.  the standard Heegaard surface of $F^*\times S^1$ is not keen and critical. In fact, if $d(S_i)\geq 2$ we show no higher index topologically minimal Heegaard surfaces exist.

\begin{figure}
\centering
    \includegraphics[width=5cm]{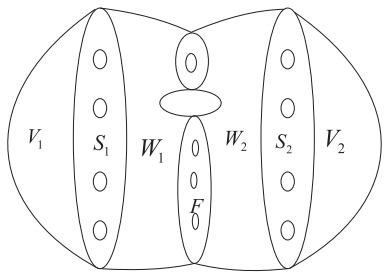}\\
  \caption{amalgamation of two Heegaard splittings}\label{gnn}
\end{figure}

\begin{thm}
Let $M$ be a 3-manifold. Suppose that $M$ admits an amalgamated Heegaard splitting:
$$M=V\cup_S W=(V_1\cup_{S_1}W_1)\cup_F(W_2\cup_{S_2}V_2).$$  such that $g(F)=g(S_1)=g(S_2)$  and $d(S_i)\geq 2$, for $i=1,~2$, then  $S$ is  either critical or not topologically minimal.
\end{thm}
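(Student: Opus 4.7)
The plan is to bound the topological index of $S$ by $2$, i.e., to show that $\pi_n(\mathscr{D}(S))=0$ for every $n\geq 2$. Since $(D_0,E_0)$ is a disjoint pair on opposite sides of $S$, the disk complex $\mathscr{D}(S)$ is non-empty and connected, so $S$ cannot have topological index $0$ or $1$; establishing vanishing of $\pi_n$ for $n\geq 2$ is therefore equivalent to the conclusion that $S$ is either critical or not topologically minimal.

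First I would re-examine Claims~\ref{01}--\ref{23} and the unnumbered final claim of the previous theorem under the weaker hypothesis $d(S_i)\geq 2$. A direct inspection shows that Claims~\ref{01}, \ref{02}, \ref{03}, \ref{11}, \ref{12}, \ref{13} (and their symmetric analogues) remain valid: each of them uses only the estimate $d_{\partial W^*}(f(\alpha),\mathscr{D}_{W^*})\geq 2$ (or its symmetric counterpart), which is still true. On the other hand, Claims~\ref{22}, \ref{23} and the final claim relied on bounds of the form $d\leq 2$ or $d\leq 3$ and are no longer contradictions here. Consequently, the only disjoint pair $(D,E)$ in which $D\in\mathscr{D}_0\cup\mathscr{D}_1$ or $E\in\mathscr{D}^0\cup\mathscr{D}^1$ is still $(D_0,E_0)$, while additional disjoint pairs may arise inside $(\mathscr{D}_2\cup\mathscr{D}_3)\times(\mathscr{D}^2\cup\mathscr{D}^3)$.

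Next I would decompose $\mathscr{D}(S)=\mathscr{D}(V)\cup\mathscr{D}(W)\cup\mathcal{B}$, where $\mathcal{B}$ is the subcomplex spanned by bridge simplices (those containing disks on both sides). By McCullough both $\mathscr{D}(V)$ and $\mathscr{D}(W)$ are contractible. The observation from the previous paragraph says that the edge $[D_0,E_0]$ is \emph{isolated} in $\mathcal{B}$: no $2$-simplex of $\mathscr{D}(S)$ contains $[D_0,E_0]$ together with a new vertex. The remainder of $\mathcal{B}$ is supported on $(\mathscr{D}_2\cup\mathscr{D}_3)\times(\mathscr{D}^2\cup\mathscr{D}^3)$, and applying the outermost-disk replacement of Lemma~\ref{2} on both sides would deformation-retract this portion of $\mathcal{B}$ onto the ``core'' subcomplex generated by disjoint pairs in $\mathscr{D}_{V^*}\times\mathscr{D}_{W^*}$. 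After collapsing $\mathscr{D}(V)$ and $\mathscr{D}(W)$ to single vertices and performing this retraction, what remains is homotopy equivalent to a graph with two vertices joined by the edge $[D_0,E_0]$ together with the edges coming from the core disjoint pairs. Graphs have vanishing $\pi_n$ for $n\geq 2$, which yields the desired bound on the topological index.

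The main obstacle is the collapsing step above: one must show that the higher-dimensional simplices of $\mathcal{B}$ living in $(\mathscr{D}_2\cup\mathscr{D}_3)\times(\mathscr{D}^2\cup\mathscr{D}^3)$ are swept away simplicially and compatibly by the outermost-disk retraction, so that the core bridge really has the homotopy type of a $1$-complex. This is precisely the step where the hypothesis $d(S_i)\geq 2$ must be used in full strength, as it controls which pairs of essential disks in $V^*$ and $W^*$ can be made simultaneously disjoint on $S$ after the outermost reduction, and thereby limits the simultaneity of disjointness needed to build a high-dimensional bridge simplex.
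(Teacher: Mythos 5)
Your approach diverges from the paper's, and the divergence introduces a genuine gap at exactly the place you flag as the ``main obstacle.'' The paper does not attempt to show $\pi_n(\mathscr{D}(S))=0$ for $n\geq 2$; instead it runs a short dichotomy. Under $d(S_i)\geq 2$, Claims~\ref{01}--\ref{03} (and their symmetric counterparts) already show that $D_0$ is disjoint from no disk in $\mathscr{D}_W$ other than $E_0$, and $E_0$ is disjoint from no disk in $\mathscr{D}_V$ other than $D_0$. Then either no disjoint pair $(D,E)$ with $D\in\mathscr{D}_V\setminus\{D_0\}$ and $E\in\mathscr{D}_W\setminus\{E_0\}$ exists --- in which case $(D_0,E_0)$ is the unique bridge, $S$ is keen weakly reducible, and by the contractibility argument of Proposition 1.1 $\mathscr{D}(S)$ is contractible, so $S$ is not topologically minimal --- or such a pair exists, and then the partition $C_0=\{D_0,E_0\}$, $C_1=\mathscr{D}(S)\setminus C_0$ satisfies Bachman's combinatorial criterion for criticality (each $C_i$ contains an opposite-side disjoint pair, and no opposite-side pair straddles $C_0$ and $C_1$), so $\pi_1(\mathscr{D}(S))\neq 1$ and $S$ has topological index exactly $2$. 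No information about $\pi_n$ for $n\geq 2$ is needed, because $\pi_1\neq 1$ alone already pins the index at $2$.

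Your plan, by contrast, requires showing that the bridge subcomplex $\mathcal{B}$ deformation-retracts, simplex by simplex and compatibly, onto a $1$-complex, with the outermost-disk move of Lemma~\ref{2} furnishing the retraction. You do not prove this, and you explicitly acknowledge it as an obstacle rather than resolving it. This is not a cosmetic omission: it is the entire content of the argument in your framing. Nothing in Lemma~\ref{2} produces a simplicial (or even continuous) map on a neighborhood of a bridge simplex; outermost-disk surgery is defined on a single disk at a time, depends on choices, and there is no argument given that the replacements of the various vertices of a $k$-simplex can be performed so that pairwise disjointness is preserved at every stage. Without that, ``$\mathcal{B}$ retracts onto a graph'' is an assertion, not a proof. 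A secondary overstatement: you say that $\pi_n(\mathscr{D}(S))=0$ for $n\geq 2$ is \emph{equivalent} to the conclusion; it is sufficient (together with connectivity and Whitehead's theorem), but not necessary, since a critical surface need not have trivial higher homotopy --- criticality only requires $\pi_1\neq 1$. This does not damage your sufficiency argument, but it signals that the detour through all higher $\pi_n$ is stronger than what the theorem asks for, which is precisely why the paper's dichotomy is both shorter and complete where your sketch is not.
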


\begin{proof} We use  notions as defined in Section 3.
 $d(S_i)\geq 2$ means that  $d_{\partial V^*}(\beta, \mathscr{D}_{V^*})\geq 2$ and $d_{\partial W^*}(f(\alpha), \mathscr{D}_{W^*})\geq 2$.
By Claim \ref{01}, Claim \ref{02} and Claim \ref{03}, if $d(S_i)\geq 2$, any disk in $\mathscr{D}_W\backslash E_0$ intersects $D_0$, equally,
any disk in $\mathscr{D}_V\backslash D_0$ intersects $E_0$. Hence if there exist $D\subset\mathscr{D}_V\backslash D_0$ and $E\subset\mathscr{D}_W\backslash E_0$, such that $D\cap E=\emptyset$ then $\pi_1(\mathscr{D}(S))\neq 1$ and $S$ is a critical Heegaard surface. if no such pair of disks exists, $S$ is keen weakly reducible and not topologically minimal.
\end{proof}


\bibliographystyle{amsplain}
\bibliographystyle{amsplain}

\end{document}